\theoremstyle{plain}
\newtheorem{Thm}{Theorem} [section]
\theoremstyle{definition}
  \newtheorem{Rem}[Thm]{Remark}
\newcommand{\RR}{{\mathbb R}}
\newcommand{\ZZ}{{\mathbb Z}}
\newcommand{\QQ}{{\mathbb Q}}
\newcommand{\NN}{{\mathbb N}}
\def\const{{\rm const}}
\def\Scal{{\rm Scal}}
\newcommand{\rd}{{\rm d}}
\newdimen\normalparindent
\let\ge\geqslant
\begin{document}

\title
{Superintegrable Bertrand \\ magnetic geodesic flows}
\author{E.\,A.\ Kudryavtseva, S.\,A.\ Podlipaev}
\date{}

\maketitle

UDK 
514.853, % (514.85 - Геометрические методы в общей механике, 514.853 - в динамике)
517.938.5 %(517.938.5 - Топологические вопросы теории динамических систем)
%\UDC{514.853, 517.938.5}

\begin{abstract}
The problem of description of superintegrable systems (i.e., systems with closed trajectories in a certain domain) in the class of rotationally symmetric natural mechanical systems goes back to Bertrand and Darboux. We describe all superintegrable (in a domain of slow motions) systems in the class of rotationally symmetric magnetic geodesic flows. We show that all sufficiently slow motions in a central magnetic field on a two-dimensional manifold of revolution are periodic if and only if the metric has a constant scalar curvature and the magnetic field is homogeneous, i.e. proportional to the area form.

{\bf Key words:} superintegrable system, surface of revolution, magnetic geodesic, magnetic Bertrand system.
\end{abstract}

\footnotetext[0]{The work was supported by the Programme of the President of RF for support of Leading Scientific Schools (grant No. NSh-6399.2018.1, Agreement No. 075--02--2018-867)
and the Russian Foundation for Basic Research (grant No. 19--01--00775-a).}

\section{Introduction}

We study magnetic geodesic flows invariant under rotations. Such a dynamical system describes the motion of a charged particle in a central {\em magnetic field} on a two-dimensional Riemannian manifold of revolution. We suppose that the magnetic field is not an identical zero. 
We study {\em slow motions}, i.e. motions with sufficiently small positive level of energy (or with a sufficiently small velocity).
From a geometrical point of view, we study {\em magnetic geodesics} of sufficiently small radii of curvature, where the radius of curvature depends on a position on the surface and is proportional to the ratio of the magnetic field to the area form, with a sufficiently small positive proportionality factor.

We are interested in the systems satisfying the following {\em Bertrand condition for slow motions}: any sufficiently slow motion of the charged particle is periodic. On a geometrical language, the Bertrand condition is formulated as follows: all magnetic geodesics of sufficiently small radii of curvature are closed curves.
We will show (Theorem \ref {thm:Bert}) that the Bertrand condition holds if and only if the metric has a constant scalar curvature and the magnetic field is homogeneous, i.e. proportional to the area form.

Let us give a short historical overview. The problem of description of superintegrable systems (i.e., systems with closed trajectories in a certain domain) in the class of rotationally symmetric natural mechanical systems goes back to Bertrand and Darboux. Natural mechanical systems of Bertrand's type were described under several restrictions by Bertrand \cite {Ber}, Darboux \cite {Dar}, Besse \cite {Bes}, Perlick \cite {Per}, Zagryadskii, Kudryavtseva and Fedoseev \cite {ZKF}, Kudryavtseva and Fedoseev \cite {KF1,KF2,KF3} and others (cf. a survey in \cite {ZKF}).

In this paper, we obtain a description (i.e., a rigorous classification) of all {\em magnetic Bertrand systems} --- superintegrable magnetic geodesic flows on two-dimensional configurational manifolds of revolution satisfying the periodicity condition for slow motions. The problem of describing all {\em electromagnetic Bertrand systems} remains open, i.e. superintegrable systems that define motion of a charged particle under the influence of a potential as well as a magnetic force fields on a two-dimensional configurational manifold of revolution, and we plan to solve it in future.

Let us indicate a difference of our approach from the classical approach of Bertrand \cite {Ber} and many of his followers including \cite{ZKF}. The classical approach is based on studying a {\em family of reduced systems} obtained from the initial system via a {\em time change} on the phase trajectories by the angle coordinate $\varphi=\varphi(t)$ (longitude) on the configuration manifold of revolution.
Such a time change $t\to\varphi(t)$ 
%along the phase trajectories 
makes sense (i.e.\ is regular and monotone) for {\em nonsingular} trajectories: when the longitude is monotone in time ($\dot\varphi(t)\ne0$) along the whole trajectory.
The reduced system has one degree of freedom, depends on one parameter (the constant of kinetic momentum) and, for each parameter value, has a nondegenerate equilibrium point of the ``centre'' type (corresponding to a circular solution).
Clearly, a superintegrability will take place in the case of a {\em common isochronicity} of the family of reduced systems, i.e., when the periods of their solutions near the indicated equilibria coincide, and their common period is commensurable with $\pi$. 
%We stress that the indicated time change $t\to\varphi(t)$ along the phase trajectories makes sense (i.e.\ is regular and monotone) only for {\em nonsingular} trajectories: when the longitude is monotone in time ($\dot\varphi(t)\ne0$).
Since, in the classical case (without a magnetic field) almost all solutions are nonsingular, the superintegrability is equivalent to the common isochronicity. But, in the ``magnetic'' case, there are ``many'' singular solutions (e.g. all sufficiently slow motions), and the indicated time change (together with the beautiful condition of a common isochronicity) does not make sense for them. For such solutions, we write the reduced system without any time change, in contrast to the classical approach. Besides that, we follow the classical approach \cite{Ber}.

Let us proceed with precise statements.

\section{Formulation of the main result} \label {sec:slow}

Let $(Q,g)$ be a smooth two-dimensional Riemannian manifold of revolution, $B$ a differential 2-form on $Q$ invariant under rotations.
The 2-form $B$ is, of course, closed ($\rd B=0$) and, thus, defines a {\em magnetic field} on $Q$. Its invariance under rotations means that the magnetic field is {\em central}.

The manifold of revolution $Q$ is diffeomorphic to either a 2-sphere or a 2-disk or 
a 2-torus or an open cylinder: $Q\setminus\mathrm{Fix}(S^1)\approx I\times S^1$ with coordinates $(r,\varphi)$, where\\
$r\in I$ is a {\em latitude}, i.e. a natural parameter on meridians $I\times\{\varphi_0\}$, \\
$\varphi\in\RR/2\pi\ZZ=S^1$ is a {\em longitude}, i.e. an angle coordinate on parallels $\{r_0\}\times S^1$. \\
Here $I=(r_1,r_2)\subseteq\RR$ (in the cases of a sphere, a 2-disk and a cylinder) or $I=S^1=\RR/L\ZZ$, $L>0$ (in the case of a 2-torus).
Without loss of generality, we will assume that $Q$ is diffeomorphic to a cylinder:
$$
Q\approx(r_1,r_2)\times S^1
$$
(the cases of a 2-sphere and a 2-disk reduce to this case by considering $Q$ punctured at the fixed points of the rotation, while the case of a 2-torus $Q\approx S^1\times S^1$ reduces to it by considering a covering cylinder $\tilde Q=\RR\times S^1$). The Riemannian metric of revolution on $Q$ has the form
$$
g= \rd r^2 + f^2(r) \rd\varphi^2, \qquad (r,\varphi) \in Q,
$$
where $f(r)>0$ is a smooth function (the radius of a parallel $\{r\}\times S^1$). The rotationally symmetric 2-form $B$ on $Q$ has the form 
$$
B=b(r)\rd r\wedge \rd\varphi=a'(r)\rd r\wedge \rd\varphi, \qquad (r,\varphi) \in Q,
$$
where $a(r)$ is a function on $(r_1,r_2)$ defined (up to an additive constant) by the condition $a'(r)=b(r)$.

The motion of the charged particle in the magnetic field $B$ on the surface $(Q,g)$ is described by the Hamiltonian system on $T^*Q$, with the Hamilton function and the complex structure
$$
H=\dfrac{p_r^2}{2} + \dfrac{p_\varphi^2}{2f^2(r)}, \quad \omega
= \rd p_r \wedge \rd r + \rd p_\varphi \wedge \rd\varphi + B
= \rd p_r \wedge \rd r + \rd (p_\varphi + a(r)) \wedge \rd\varphi .
$$
Since the Hamilton function $H$ is independ of the angle variable $\varphi$, the system is integrable via an additional first integral 
$$
K=\tilde p_\varphi := p_\varphi + a(r)
$$
(the kinetic momentum). The first integral $K$ is $2\pi$-periodic, i.e., defines a free Hamiltonian action of the circle on $T^*Q$.

If the magnetic field $B=a'(r)\rd r\wedge \rd\varphi$ has no zeros, the change $r\to a=a(r)$ of the latitude $r\in(r_1,r_2)$ is monotone and regular. The corresponding momentum change is $p_r=a'(r)p_a$. We obtain a manifold 
of revolution $Q\simeq(a_1,a_2)\times S^1$ with Riemannian metric
$$
g = \dfrac{\rd a^2}{R(a)} + \dfrac{\rd\varphi^2}{F(a)},
$$
where $R(a(r))=a'(r)^2>0,$ $F(a(r))=1/f^2(r)>0.$ The Hamilton function and the symplectic structure are
\begin{equation} \label {eq:Ham}
H=R(a)\dfrac{p_a^2}{2} + F(a)\dfrac{p_\varphi^2}{2}, \qquad \omega = \rd p_a \wedge \rd a + \rd (p_\varphi+a) \wedge \rd\varphi.
\end{equation}
In the canonical variables $(a,\varphi,p_a,\tilde p_\varphi=K)$, they take the form
\begin{equation} \label {eq:Ham:}
H=R(a)\dfrac{p^2_a}{2} + F(a)\dfrac{(\tilde p_\varphi-a)^2}{2}, \qquad
\omega=\rd p_a \wedge \rd a + \rd \tilde p_\varphi\wedge \rd\varphi,
\end{equation}
and the equations of motion of the charged particle have the form 
\begin{equation} \label {eq:ODE}
\left\{
\begin{array}{l}
\dot a = \dfrac{\partial H}{\partial p_a}
= R (a)p_a, \\
\dot{\varphi}
= \dfrac{\partial H}{\partial \tilde p_\varphi}
= F(a)(\tilde p_\varphi - a),\\
\dot{p}_a = - \dfrac{\partial H}{ \partial a}
= - R'(a)\dfrac{p^2_a}{2}
- F'(a)\dfrac{(\tilde p_\varphi-a)^2}{2}
+ F(a)(\tilde p_\varphi-a), \\
\dot {\tilde p}_\varphi = - \dfrac{\partial H}{\partial\varphi} = 0. \\
\end{array}
\right.
\end{equation}

Let us study an analogue of the {\em Bertrand problem} (cf. e.g. \cite {ZKF}) for {\em slow} (i.e.\ with small velocity) motions of the charged particle in the magnetic field under consideration.

\begin{Thm} [{\cite[Theorem 7.1]{Pod2015}}] \label {thm:Bert}
Suppose that the central magnetic field $B$ on a two-dimensional Riemannian manifold of revolution $(Q,g)$ is different from the identical zero. Suppose that the ``Bertrand condition for slow motions'' holds: any motion with a sufficiently small nonzero velocity is periodic (in detail: there exists a continuous function $f>0$ on the manifold $Q$ 
such that any motion with initial conditions $(q,p)\in T^*Q$, $0<|p|<f(q)$, is periodic). Then:

{\rm(a)} the magnetic field $B$ is {\em homogeneous}, i.e., proportional to the area form $\rd\sigma=f(r)\rd  r\wedge \rd\varphi$ (hence $B$ has no zeros, the motion is described by the Hamiltonian system (\ref {eq:Ham}) and (\ref {eq:ODE}), the 2-forms $B=\rd a\wedge \rd\varphi$ and $\rd\sigma=\dfrac{\rd a\wedge \rd\varphi}{\sqrt{RF}}$ are proportional and $RF\equiv \lambda^2=\const$);

{\rm(b)} the minimal positive period of the solution with the initial condition $(q,p)\in T^*Q$, $0<|p|<f(q)$, continuously depends on the initial condition $(q,p)$ and tends to the minimal positive period of the linearized system at the equilibrium $(q,0)$ as $|p|\to0$;

{\rm(c)} the scalar curvature $\Scal$ of the manifold 
$(Q,g)$ is constant and equals $\Scal=\dfrac{\lambda^2}{F^3}(F''F-2(F')^2)=-\left(\dfrac{\lambda^2} F\right)''=-R''$.
In particular, $R(a)=\dfrac{\lambda^2}{F(a)}=\lambda_1+\lambda_2a-\Scal\dfrac{a^2}{2}$ for some constants $\lambda_1,\lambda_2\in\RR$.

The converse is also true: if the scalar curvature $\Scal$ of the manifold 
$(Q,g)$ is constant, and the magnetic field $B$ is homogeneous and different from the identical zero, then the Bertrand condition for slow motions holds.
\end{Thm}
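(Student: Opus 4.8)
\emph{Proof strategy.} Following the paper's stated approach, I would pass to the cylinder $Q\approx(r_1,r_2)\times S^1$, note that the zero section $\{p=0\}\subset T^*Q$ consists entirely of equilibria (at $p_r=p_\varphi=0$, i.e.\ $p_r=0,\ \tilde p_\varphi=a(r)$, all right‑hand sides of the Hamiltonian equations vanish, since $(\tilde p_\varphi-a(r))^2$ has a double zero there), so every slow motion is a small oscillation near such an equilibrium. Reducing by the free $S^1$–action generated by $K=\tilde p_\varphi$ gives, at each level $K=c$, the one–degree–of–freedom system $H_c(r,p_r)=\tfrac12 p_r^2+U_c(r)$ with $U_c(r)=\bigl(c-a(r)\bigr)^2/\bigl(2f^2(r)\bigr)$, whose effective potential has at a point $r_0$ with $a(r_0)=c$ a critical point which is a \emph{nondegenerate} minimum exactly when $b(r_0)\ne0$, with $U_c''(r_0)=a'(r_0)^2/f(r_0)^2$. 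Here — in contrast with Bertrand's method — one does \emph{not} perform a time change by $\varphi$ (impossible for slow orbits, along which $\dot\varphi$ changes sign): a motion of small speed $|p|$ is just the small reduced oval $\{H_c=h\}$, $h=|p|^2/2$, around the center, and the full motion is periodic if and only if the longitude advance $\Delta\varphi_c(h)=\oint_{\{H_c=h\}}\dot\varphi\,\rd t=\oint F(a)(c-a)\,\rd t$ over one reduced period lies in $2\pi\QQ$.

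The analytic core is the identity $\Delta\varphi_c(h)=-2\pi\,\d_c I(c,h)$, where $I(c,h)=\tfrac1{2\pi}\oint_{\{H_c=h\}}p_r\,\rd r$ is the reduced action; it drops out by differentiating $p_r=p_r(r;c,h)$ under the integral sign, just like the companion $\d_h I=T_c(h)/2\pi$. Since $I(c,\cdot)$ is smooth with $I(c,0)=0$, write $I(c,h)=c_1(c)h+c_2(c)h^2+\dots$; then $\Delta\varphi_c(h)\to0$ as $h\to0$, so the Bertrand hypothesis together with continuity and total disconnectedness of $2\pi\QQ$ forces $\Delta\varphi_c(h)\equiv0$ for small $h$, hence $\d_c c_1\equiv\d_c c_2\equiv0$ on every interval of admissible $c$. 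Now $c_1(c)=1/\sqrt{R(c)F(c)}$ is the reciprocal linearized (``cyclotron'') frequency, so $RF\equiv\const$ locally; I would promote this to all of $Q$ by a connectedness argument — on any interval $J$ where $b\ne0$ one gets $|b|/f\equiv\lambda_J>0$ on $\overline J$, so a connected component of $\{b\ne0\}$ cannot have an interior endpoint and must be all of $(r_1,r_2)$ — which yields (a): $B$ has no zeros and is homogeneous, $RF\equiv\lambda^2$. With $F=\lambda^2/R$ in force, a Birkhoff–normal–form computation at the center identifies $c_2(c)$ with the first Birkhoff invariant, and after simplification (using $a'=\lambda f$) the algebra collapses to $\kappa_1=-\tfrac{\lambda}{2}\,a'''/a'$; since $R'=2a''$ and $R''=2a'''/a'$ as functions of $a$, $\d_c c_2\equiv0\iff R''\equiv\const$, which together with the curvature computation $\Scal=-R''=\lambda^2 F^{-3}\bigl(F''F-2(F')^2\bigr)$ for $g=\rd a^2/R+(R/\lambda^2)\,\rd\varphi^2$ gives (c). Part (b) is then immediate: by $\Delta\varphi_c\equiv0$ the minimal period of a slow motion equals the reduced period $T_c(h)=2\pi\,\d_h I(c,h)$, a continuous function of $(q,p)$ which tends to $2\pi/\lambda$, the minimal period of the linearized system at $(q,0)$, as $h\to0$.

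For the converse, given that $B$ is homogeneous and $\Scal$ is constant, a sufficiently slow motion is a magnetic geodesic of constant geodesic curvature $k_g=\lambda/|p|\gg1$ in a surface of constant curvature; such curves are ordinary circles, hence closed, and (being small) stay inside $Q$, so all sufficiently slow motions are periodic, with a continuous threshold coming from uniformity in $q$; alternatively one verifies directly that the elementary integral $I(c,h)=\tfrac1\pi\int\sqrt{2hR(a)-\lambda^2(c-a)^2}\,R(a)^{-1}\rd a$ (with $R$ quadratic in $a$ the radicand is quadratic too) is independent of $c$, whence $\Delta\varphi_c\equiv0$. The main obstacle is the middle paragraph: carrying out the normal–form computation so that the telescoping $c_2\leftrightarrow a'''/a'$ really occurs (so that only the $h$– and $h^2$–coefficients of $\Delta\varphi_c$ are needed), together with the bookkeeping that excludes zeros of $B$, handles the endpoints $r_1,r_2$, and — when $Q$ is a sphere, disk or torus — checks smoothness of the classified metric and field at the fixed points of the rotation (and forces flatness in the toroidal case, where $a$ is unbounded so a quadratic $R$ must be constant).
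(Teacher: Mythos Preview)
Your strategy is correct and runs parallel to the paper's: reduce by the $S^1$--action, argue that the longitude advance $\Delta\varphi_c(h)$ over one reduced period is continuous, $2\pi\QQ$--valued by the Bertrand hypothesis, and tends to $0$, hence vanishes identically; then read off $RF\equiv\const$ and $\Scal\equiv\const$ from the first two nontrivial coefficients of its small--energy expansion, and prove the converse by recognising slow magnetic geodesics as constant--curvature circles on a constant--curvature surface. The technical packaging differs. The paper introduces an explicit scale parameter $\varepsilon$ via the guiding--centre change $(\hat a,\hat\varphi)=(a+p_\varphi,\varphi-p_a)$ together with the rescaling $(p_a,p_\varphi)=\varepsilon(\hat p_a,\hat p_\varphi)$, writes $\Phi(\varepsilon,\hat E,\varkappa)$ as an explicit Abelian--type integral, shows $\Phi=O(\varepsilon^2)$ (hence $\Phi\equiv0$), and computes $\lim_{\varepsilon\to0}\varepsilon^{-2}\Phi$ to get $(RF)'=0$; for (c) it re--expands $\Phi(E,K)$ in the half--width $h$ and reads off the $h^4$--coefficient, obtaining $6F'^3-6FF'F''+F^2F'''\equiv0$, which it checks to be $\Scal'\equiv0$. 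Your route replaces both expansions by the single identity $\Delta\varphi_c=-2\pi\,\partial_c I$ and the energy expansion $I(c,h)=c_1(c)h+c_2(c)h^2+\cdots$, so that $c_1'=0$ yields (a) and $c_2'=0$ yields (c); this is cleaner and avoids the guiding--centre coordinates, but you still owe the Birkhoff--coefficient computation you flag as the obstacle, whereas the paper's direct Taylor expansion of the integral is fully written out and visibly collapses to $R'''\equiv0$. Your connectedness argument excluding zeros of $B$ and your treatment of (b) match the paper's Step~6 and Step~4 respectively.
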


\section {Derivation of homogeneity of the magnetic field from the Bertrand condition for slow motions}

In this section, we prove items (a) and (b) of Theorem \ref {thm:Bert}.

Suppose that the magnetic field $B$ has no zeros (the general case is studied on Step 6 below).
Thus, the motion is given by the Hamiltonian system (\ref {eq:Ham}) and (\ref {eq:ODE}).
We pass to more convenient canonical variables obtained from the initial variables by a {\em 
guiding-centre transformation} \cite [\S3]{Nei},
\begin{equation} \label {eq:hat:varphi}
h:(a,\varphi,p_a,p_\varphi) \mapsto
(\hat a, \hat\varphi, p_a,p_\varphi), \qquad
\hat a=a+p_\varphi=K, \qquad \hat\varphi=\varphi-p_a.
\end{equation}
In the new variables, the symplectic structure takes a canonical form:
$$
\omega
=\rd p_a \wedge \rd (\hat a-p_\varphi) + \rd \hat a \wedge \rd(\hat\varphi+p_a)
= - \rd p_a \wedge \rd p_\varphi + \rd \hat a \wedge \rd\hat\varphi.
$$

Perform a {\em scale change} of momenta $h_\varepsilon:(\hat a, \hat\varphi, \hat p_a, \hat p_\varphi) \mapsto (\hat a, \hat\varphi, p_a, p_\varphi)$ by the formula
$$
p_a = {\varepsilon} \hat p_a, \qquad
p_\varphi = {\varepsilon} \hat p_\varphi,
$$
where $\varepsilon>0$ is a small parameter (e.g., the modulus of the velocity vector). Then
$$
H
= \varepsilon^2\left(R(\hat a - \varepsilon \hat p_\varphi)\dfrac{\hat p_a^2}{2}
+ F(\hat a - \varepsilon \hat p_\varphi)\dfrac{\hat p_\varphi^2}{2} \right) ,
\qquad
\omega
= \rd \hat a \wedge \rd\hat\varphi - \varepsilon^2 \rd \hat p_a \wedge \rd \hat p_\varphi .
$$
The equations of motion in new variables $(\hat a, \hat\varphi, \hat p_a, \hat p_\varphi)$ (sometimes called ``slow-fast'' variables \cite{Kud} for the given system) have the form
\begin{equation} \label {eq:ODE:eps}
\left\{
\begin{array}{l}
\dot {\hat a} = - \dfrac{\partial H}{\partial\hat \varphi} = 0,\\
\dot {\hat \varphi}
= \dfrac{\partial H}{\partial \hat a}
= \varepsilon^2\left(R'(\hat a - \varepsilon\hat p_\varphi)\dfrac{{\hat p_a}^2}{2}
+ F'(\hat a - \varepsilon\hat p_\varphi)\dfrac{\hat p_\varphi^2}{2}\right),\\
\dot{\hat p}_a = \dfrac{1}{\varepsilon^2} \dfrac{H}{ \partial \hat p_\varphi}
= - \varepsilon R'(\hat a - \varepsilon\hat p_\varphi)\dfrac{{\hat p_a}^2}{2}
- \varepsilon F'(\hat a - \varepsilon\hat p_\varphi)\dfrac{{\hat p_\varphi}^2}{2}
+ F(\hat a - \varepsilon\hat p_\varphi)\hat p_\varphi,\\
\dot{\hat p}_\varphi = - \dfrac{1}{\varepsilon^2} \dfrac{\partial H}{\partial \hat p_a}
= - R (\hat a - \varepsilon\hat p_\varphi)\hat p_a.
\end{array}
\right.
\end{equation}

\begin{Rem} \label {rem:circ}
Let us find all {\em relative equilibria}, i.e., all phase points at which $\rd H$ and $\rd K$ are proportional (so, the corresponding integral curves $\gamma\subset T^*Q$ are {\em circular orbits} and {\em equilibria}, since they satisfy $a \equiv \const$). Due to the equations (\ref {eq:ODE}), we have $p_a \equiv 0,$ $F'(a)\dfrac{(\tilde p_\varphi - a)^2}{2} + F(a)(a-\tilde p_\varphi) = 0.$
The latter equality implies $\tilde p_\varphi \equiv a$ (the case of an equilibrium) or $\dfrac{F'(a)}{F(a)}(\tilde p_\varphi - a) \equiv 2$ (the case of a circular orbit).
In the new variables, for $\varepsilon>0$, we obtain $\hat p_a \equiv 0$ and either $\hat p_\varphi=0$ (in the case of an equilibrium), or $\dfrac{F'(\hat a - \varepsilon\hat p_\varphi)}{F(\hat a - \varepsilon\hat p_\varphi)}\varepsilon\hat p_\varphi \equiv 2$ 
(in the case of a circular orbit).
\end{Rem}

As $\varepsilon\to0$, the system (\ref {eq:ODE:eps}) tends to a well defined {\em limit system} --- the family of harmonic oscillators on each fibre, i.e.\ the family of Hamiltonian systems
\begin{equation} \label {eq:harm}
\left(T^*_{q}Q, \ \omega_{q}=-\rd \hat p_a \wedge \rd \hat p_\varphi, \ H_{q}=R(\hat a) \dfrac{\hat p_a^2}{2} + F(\hat a)\dfrac{\hat p_\varphi^2}{2}\right)
\end{equation}
with parameters $q=(\hat a,\hat\varphi)\in(a_1,a_2)\times S^1=Q,$ where $\dot{\hat a}=\dot{\hat\varphi}=0.$
Under the inverse change $h^{-1}\circ h_0$, any solution of the limit system is transformed to the corresponding equilibrium $h^{-1}(\hat a,\hat\varphi,0,0)=(\hat a,\hat\varphi,0,0)$ of the initial system (\ref {eq:ODE}) indicated in Remark \ref {rem:circ}.

Thus, solutions of the system (\ref {eq:ODE:eps}) correspond to {\em slow} (i.e.\ having a small velocity) motions given by the system (\ref {eq:ODE}) with (\ref {eq:Ham}).
Notice that the limit system has the form $\dot{\hat a}=\dot{\hat\varphi}=0,$ $\dot{\hat p}_a=F(\hat a)\hat p_\varphi,$ $\dot{\hat p}_\varphi = -R (\hat a)\hat p_a.$ Therefore all its solutions, apart from the equilibria $\hat p_a=\hat p_\varphi=0,$ satisfy the equation $\ddot{\hat p}_a = -R(\hat a)F(\hat a)\hat p_a$ and, hence, determine harmonic oscillations with angular frequency $\sqrt{R(\varkappa)F(\varkappa)}$ and minimal positive period
\begin{equation} \label {eq:T}
T(\varkappa)=\dfrac{2\pi}{\sqrt{R(\varkappa)F(\varkappa)}},
\end{equation}
where $\varkappa\in(a_1,a_2)$ is the value of the first integral $K=\hat a=\tilde p_\varphi$ on the given solution.

\begin{proof}[Proof of items (a) and (b) of Theorem \ref {thm:Bert}]
Step 1. Let us first suppose that the magnetic field $B$ has no zeros.

Fix a number $\varkappa\in(a_1,a_2),$ and consider the {\em effective potential}
$$
U_{\varkappa}=U_{\varkappa}(a):=F(a)\dfrac{(\varkappa - a)^2}{2}, \qquad a\in(a_1,a_2),
$$
of the system (\ref {eq:Ham}). The Hamilton function has the form $H=R(a)\dfrac{p_a^2}{2} + U_{K} (a)$.
In the new variables, the effective potential is
\begin{equation} \label {eq:Ueff}
\dfrac1{\varepsilon^2}U_{\varkappa}(a)
=F(\varkappa - \varepsilon\hat p_\varphi)\dfrac{\hat p_\varphi^2}{2}
=:\Hat U_{\varepsilon,\varkappa}(\hat p_\varphi),
\qquad \hat p_\varphi\in\RR,
\end{equation}
and the Hamilton function is $\dfrac1{\varepsilon^2}H=R(K - \varepsilon\hat p_\varphi)\dfrac{\hat p_a^2}{2} + \Hat U_{\varepsilon,K} (\hat p_\varphi).$

Let us study the {\em reduced system} corresponding to the $2\pi$-periodic first integral $K$, with the Hamilton function and the symplectic structure 
\begin{equation} \label {eq:reduc}
H_{\varepsilon,\varkappa}
:= \dfrac1{\varepsilon^2}H|_{\{K=\varkappa\}}
= R(\varkappa - \varepsilon\hat p_\varphi)\dfrac{\hat p_a^2}{2} + \Hat U_{\varepsilon,\varkappa} (\hat p_\varphi), \qquad
- \rd \hat p_a \wedge \rd \hat p_\varphi,
\end{equation}
where $\varkappa\in(a_1,a_2)$ is the parameter of the reduced system, $0<\varepsilon\ll1$ is a small (``scale'') parameter. The reduced system is defined on a ``reduced phase plane'' $\{K=\varkappa\}/S^1\subset(T^*Q)/S^1$ with phase variables $(\hat p_a,\hat p_\varphi)$ and an induced symplectic structure, where one considers the Hamiltonian action of the circle $S^1$ on $T^*Q$ generated by the $2\pi$-periodic first integral $K$.

The Hamilton function $H_{\varepsilon,\varkappa}$ of the reduced system (\ref {eq:reduc}) equals the sum of the ``reduced kinetic energy'' $R(\varkappa - \varepsilon\hat p_\varphi)\dfrac{\hat p_a^2}{2}$ (quadratic in the ``reduced momentum'' $\hat p_a$) and the effective potential $\Hat U_{\varepsilon,\varkappa}(\hat p_\varphi)$ (depending on the ``reduced coordinate'' $\hat p_\varphi$ only). That is, the reduced system is a natural mechanical system with one degree of freedom, therefore we can explicitly solve it by standard techniques. Let us do this.

For the value $\Hat E$ of the reduced Hamilton function $H_{\varepsilon,\varkappa}$, we have $R(\varkappa-\varepsilon\hat p_\varphi) \hat p_a^2 = 2\Hat E-2\Hat U_{\varepsilon,\varkappa}(\hat p_\varphi)\ge0.$ Therefore the value $\hat p_a$ can be expressed in terms of $\varepsilon,\Hat E,\varkappa$ and $\hat p_\varphi$ by the formula
\begin{equation} \label {eq:p}
\hat p_a
=\pm\sqrt{\dfrac{2\Hat E-2\Hat U_{\varepsilon,\varkappa}(\hat p_\varphi)}{R(\varkappa-\varepsilon\hat p_\varphi)}}
\stackrel{(\ref {eq:Ueff})}
= \pm\sqrt{\dfrac{2\Hat E- F(\varkappa - \varepsilon\hat p_\varphi) \hat p_\varphi^2}{R(\varkappa-\varepsilon\hat p_\varphi)}}.
\end{equation}

Let us fix real numbers $\varkappa\in(a_1,a_2)$, $\Hat E\in(0,1)$ and $\hat p_\varphi\in\RR$ such that $|\hat p_\varphi|^2<\dfrac{2\Hat E}{F(\varkappa)}$. In the limit $\varepsilon\to0$, we obtain
\begin{equation} \label {eq:p:lim}
\Hat U_{0,\varkappa}(\hat p_\varphi)=F(\varkappa)\dfrac{\hat p_\varphi^2}{2}, \qquad
\hat p_a=\pm\sqrt{\dfrac{2\Hat E-F(\varkappa)\hat p_\varphi^2}{R(\varkappa)}}.
\end{equation}
Notice that the numerator of the radical expression in (\ref {eq:p:lim}) has two simple roots $\hat p_\varphi = A_\pm(0,\Hat E,\varkappa):=\pm\sqrt{\dfrac{2\Hat E}{F(\varkappa)}},$ while the denominator is positive everywhere. 
It follows from the Inverse Function Theorem that, for sufficiently small ``perturbation'' $0<\varepsilon\ll1$, the numerator of the radical expression in (\ref {eq:p}) also has two simple roots, denoted by $A_\pm(\varepsilon,\Hat E,\varkappa)$, that are $O(\varepsilon)$-close to the indicated roots.

This implies (cf. e.g. \cite[\S4, Proposition 2]{ZKF}) that, for any $0<\varepsilon\ll1$, there exists a (unique up to time shifts $t\mapsto t+t_0$) solution
\begin{equation} \label {eq:sol:reduc}
\bar\gamma_{\varepsilon;\Hat E,\varkappa}(t) = ((\hat p_\varphi)_{\varepsilon;\Hat E,\varkappa}(t),(\hat p_a)_{\varepsilon;\Hat E,\varkappa}(t)), \qquad t\in\RR,
\end{equation}
of the reduced system (\ref {eq:reduc}) on the given level $\Hat E$ of the Hamiltonian function, on which the variable $\hat p_\varphi=(\hat p_\varphi)_{\varepsilon,\Hat E,\varkappa}(t)$ takes at least one value from the segment $[A_-(\varepsilon,\Hat E,\varkappa), A_+(\varepsilon,\Hat E,\varkappa)].$ Actually, on the solution (\ref {eq:sol:reduc}), the variable $\hat p_\varphi$ takes all values from the indicated segment, i.e. $(\hat p_\varphi)_{\varepsilon,\Hat E,\varkappa}(\RR^1)$ coincides with this segment.

This implies (cf. e.g. \cite[\S4, предложение 3]{ZKF}) that the time-dependence of the variable $\hat p_\varphi=(\hat p_\varphi)_{\varepsilon,\Hat E,\varkappa}(t)$ and, hence, of the {\em latitude} $a=\varkappa-\varepsilon(\hat p_\varphi)_{\varepsilon,\Hat E,\varkappa}(t)$ is a periodic function, whose half-period equals the time $T(\varepsilon,\hat E,\varkappa)/2=t_+-t_-$ of motion between its adjacent minimum and maximum, where $(\hat p_\varphi)_{\varepsilon,\Hat E,\varkappa}(t_-)=A_-(\varepsilon,\Hat E,\varkappa),$ $(\hat p_\varphi)_{\varepsilon,\Hat E,\varkappa}(t_+)=A_+(\varepsilon,\Hat E,\varkappa).$

Step 2. Now, let us ``lift'' the solution (\ref {eq:sol:reduc}) of the reduced system to the phase space $T^*Q$, i.e., consider the corresponding solution
$$
\gamma_{\varepsilon;\Hat E,\varkappa}(t)
 = (\varkappa,
\hat\varphi_{\varepsilon;\Hat E,\varkappa}(t),
(\hat p_a)_{\varepsilon;\Hat E,\varkappa}(t),
(\hat p_\varphi)_{\varepsilon;\Hat E,\varkappa}(t)), \qquad t\in\RR,
$$
of the initial system (\ref {eq:ODE:eps}) (such a solution is unique up to shifts of the angle variable $\varphi\mapsto\varphi+\varphi_0$).

Let us project the corresponding phase curve $\{\gamma_{\varepsilon;\Hat E,\varkappa}(t)\mid t\in\RR\}$ to the configuration cylinder $Q$ with coordinates $(a,\hat\varphi)\in(a_1,a_2)\times S^1$. We obtain the corresponding ``orbit'' of the charged particle \footnote{When we use here the term ``orbit'', we abuse notation. Strictly speaking, on the configuration cylinder $Q$, we introduced the coordinates $(a,\varphi)$ rather than $(a,\hat\varphi)=(a,\varphi-p_a)=(a,\varphi-\varepsilon\hat p_a)$. Thus, the true orbit of the charged particle is obtained via a projection onto the configuration cylinder $Q$ with coordinates $(a,\varphi)$, hence it is only $O(\varepsilon)$-close to the ``orbit'' mentioned above (and does not coincide with it, generally speaking). We remark that, for the true orbit, the pericentres, apocentres and the time of motion between adjacent pericentre and apocentre are exactly the same as for our ``orbit''. This similarity between our ``orbit'' and the true orbit will suffice for us to prove Theorem \ref {thm:Bert}.}:
$$
\{(a,\hat\varphi)
=(\varkappa-\varepsilon(\hat p_\varphi)_{\varepsilon,\Hat E,\varkappa}(t),
\hat \varphi_{\varepsilon,\Hat E,\varkappa}(t))
\mid t\in\RR\} \subset Q.
$$
On this ``orbit'', consider the points (called the {\em pericentres} and {\em apocentres} of the orbit), whose {\em latitudes} $a=\varkappa-\varepsilon(\hat p_\varphi)_{\varepsilon,\Hat E,\varkappa}(t)$ are the (left and right, respectively) endpoints of the corresponding segment $[\varkappa-\varepsilon A_+(\varepsilon,\Hat E,\varkappa),\ \varkappa-\varepsilon A_-(\varepsilon,\Hat E,\varkappa)].$

Step 3. As above, let us fix the numbers $\varkappa\in(a_1,a_2),$ $\Hat E\in(0,1)$ and $\hat p_\varphi\in\RR$ such that $|\hat p_\varphi|^2<\dfrac{2\Hat E}{F(\varkappa)}.$ Following \cite {ZKF}, let us compute the minimal positive period $T(\varepsilon, \hat{E}, \varkappa)=2(t_+-t_-)$ of the latitude function $a=\varkappa-\varepsilon(\hat p_\varphi)_{\varepsilon,\Hat E,\varkappa}(t)$, when $0<\varepsilon\ll1$:
$$
T(\varepsilon, \hat{E}, \varkappa)
= 2 \int\limits_{A_-(\varepsilon,\Hat E,\varkappa)}^{A_+(\varepsilon,\Hat E,\varkappa)}
\dfrac{\rd \hat p_\varphi}{{(\dot {\hat p}_\varphi)}_{\varepsilon,\Hat E,\varkappa}}
\stackrel{(\ref {eq:ODE:eps}),(\ref {eq:p})}=
$$
$$
= 2 \int\limits_{A_-(\varepsilon,\Hat E,\varkappa)}^{A_+(\varepsilon,\Hat E,\varkappa)}
\dfrac{\rd u}{\sqrt{R (\varkappa - \varepsilon u)} \sqrt{2\Hat E- F(\varkappa - \varepsilon u)u^2}}.
$$
In particular, $T(0,\hat{E},\varkappa)=\dfrac{2\pi}{\sqrt{R(\varkappa)F(\varkappa)}}=T(\varkappa),$ cf.\ (\ref {eq:T}).

Step 4. Consider the real number
$$
\Phi(\varepsilon, \Hat{E}, \varkappa)
:=\varphi_{\varepsilon,\Hat E,\varkappa}(t_+)-\varphi_{\varepsilon,\Hat E,\varkappa}(t_-)
\stackrel{(\ref {eq:hat:varphi}),(\ref {eq:p})}
=\hat\varphi_{\varepsilon,\Hat E,\varkappa}(t_+)-\hat\varphi_{\varepsilon,\Hat E,\varkappa}(t_-),
$$
i.e.\ the difference of {\em longitudes} of the adjacent pericentre and apocentre of the ``orbit'' (cf. Step 2).
Following \cite {ZKF}, for each $\varepsilon\in\RR$, $0<|\varepsilon|\ll1,$ we have
$$
\dfrac1{\varepsilon^2}\Phi(\varepsilon, \hat{E}, \varkappa)
= 2 \int\limits_{A_-(\varepsilon,\Hat E,\varkappa)}^{A_+(\varepsilon,\Hat E,\varkappa)}
\dfrac{\dot{\hat{\varphi}}_{\varepsilon,\Hat E,\varkappa} \rd\hat p_\varphi}
{(\dot{\hat p}_\varphi)_{\varepsilon,\Hat E,\varkappa}}
\stackrel{(\ref {eq:ODE:eps}),(\ref {eq:p})}=
$$
$$
= \sqrt2 \int\limits_{A_-(\varepsilon,\Hat E,\varkappa)}^{A_+(\varepsilon,\Hat E,\varkappa)}
\left(
\dfrac{R'(\varkappa - \varepsilon u)
\sqrt{\Hat E- F(\varkappa - \varepsilon u)\dfrac{u^2}2}} {R(\varkappa-\varepsilon u)^{3/2}} +
\right.
$$
$$
\left.
+ \dfrac{F'(\varkappa - \varepsilon u)\dfrac{u^2}{2}}
{\sqrt{R (\varkappa - \varepsilon u)} \sqrt{\Hat E
- F(\varkappa - \varepsilon u)\dfrac{u^2}2}}
\right) \rd u = O(1).
$$

Now suppose that the Bertrand condition for slow motions holds, i.e., the solution $\gamma_{\varepsilon,\Hat E,\varkappa}(t)$ from Step 2 is periodic, when $0<\varepsilon\ll1$. Therefore, its minimal positive period is a multiple of the minimal positive period $T(\varepsilon,\hat E,\varkappa)$ of the latitude function $a=\varkappa-\varepsilon(\hat p_\varphi)_{\varepsilon,\Hat E,K}(t)$, i.e.\ has the form $k T(\varepsilon,\hat E,\varkappa)$ for some $k\in\NN.$ Therefore, the increment $k\Phi(\varepsilon, \Hat{E}, \varkappa)$ of the ``longitude'' $\hat\varphi_{\varepsilon,\Hat E,K}(t)$ in time $k T(\varepsilon,\hat E,\varkappa)$ is a multiple of $2\pi,$ i.e.\ has the form $k\Phi(\varepsilon, \hat{E}, \varkappa)=2\pi\ell$ for some $\ell\in\NN.$ Here the integers $k,\ell$ depend, generally speaking, on $\varepsilon, \hat{E}, \varkappa$.

Thus, the number $\dfrac{1}{2\pi}\Phi(\varepsilon, \hat{E}, \varkappa)=\frac\ell k$ is rational for any $\varepsilon, \hat{E}, \varkappa$ such that $\Hat E\in(0,1)$ is fixed and $|\varepsilon|>0$ is small enough. Since the function $\Phi(\varepsilon, \hat{E}, \varkappa)$ is continuous (due to the fact that $A_{\pm}(\varepsilon,\Hat E,\varkappa)$ are simple roots of the equation $\Hat U_{\varepsilon,\varkappa}(\hat p_\varphi)=\Hat E$ by Step 1) and takes values in a discrete set $\pi\QQ,$ it must be constant. Since it has order $O(\varepsilon^2)$, it tends to 0 as $\varepsilon\to0$, hence it is identically equal to 0.

Since $\Phi(\varepsilon, \Hat{E}, \varkappa)\equiv0,$ we conclude that the ``longitude'' function $\hat\varphi_{\varepsilon,\Hat E,\varkappa}(t)$ itself (rather than just its time derivative) is $T(\varepsilon, \hat{E}, \varkappa)$-periodic, similarly to the latitude function $a=\varkappa-\varepsilon(\hat p_\varphi)_{\varepsilon,\Hat E,K}(t)$. Therefore the solution $\gamma_{\varepsilon,\Hat E,\varkappa}(t)$ is also $T(\varepsilon, \hat{E}, \varkappa)$-periodic. We obtain from Step 3 that, for any fixed $\Hat E\in(0,1)$ and $0<\varepsilon\ll1$,
$$
T(\varepsilon,\hat{E},\varkappa)-T(\varkappa)=T(\varepsilon,\hat{E},\varkappa)-T(0,\hat{E},\varkappa)=O(\varepsilon),
$$
hence item (b) is proved.

Step 5. Let us prove item (a). By Step 4, we have $\Phi(\varepsilon, \Hat{E}, \varkappa)=0$ for any $0<|\varepsilon|\ll1$, whence $\lim_{\varepsilon\to0}\dfrac{1}{\varepsilon^2}\Phi(\varepsilon, \Hat{E}, \varkappa)=0$. On the other hand, taking into account formulae from Step 4, it is not hard to compute
$$
\lim_{\varepsilon\to0}\dfrac{1}{\varepsilon^2}\Phi(\varepsilon, \Hat{E}, \varkappa)
= 2 \int\limits_{A_-(0,\Hat E,\varkappa)}^{A_+(0,\Hat E,\varkappa)}
\dfrac{\dot{\hat\varphi}_{0,\Hat E,\varkappa} \rd \hat p_\varphi}{(\dot{\hat p}_\varphi)_{0,\Hat E,\varkappa}}
$$
$$
= \sqrt2 \int\limits_{A_-(0,\Hat E,\varkappa)}^{A_+(0,\Hat E,\varkappa)}
\left(
\dfrac{R'(\varkappa)} {R(\varkappa)^{3/2}}
\sqrt{\Hat E - F(\varkappa)\dfrac{u^2}2} + \dfrac{F'(\varkappa)\dfrac{u^2}{2}}
{\sqrt{R (\varkappa)} \sqrt{\Hat E - F(\varkappa)\dfrac{u^2}2}} \right) \rd u
$$
$$
= \pi \hat{E}
\dfrac{R'(\varkappa)F(\varkappa) + R(\varkappa)F'(\varkappa)}{(R(\varkappa)F(\varkappa))^\frac{3}{2}}
= - 2\pi \Hat E
\dfrac{\rd}{\rd \varkappa} \left( \dfrac{1}{\sqrt{R(\varkappa)F(\varkappa)}} \right).
$$

Due to the latter formula and the identity $\Phi(\varepsilon, \hat{E}, \varkappa) \equiv 0$ proved for any $\Hat E\in(0,1)$, $\varkappa\in(a_1,a_2)$ and $0<\varepsilon\ll1$, we obtain that the function $\dfrac{1}{\sqrt{R(\varkappa)F(\varkappa)}}$ is constant, whence $RF=\const$ (i.e., the magnetic field $B$ is homogeneous).

Step 6. Let us prove (a) in the general situation, i.e.\ without the assumption that the magnetic field $B$ has no zeros. Let $Z$ be the zero locus of the field $B$. By assumption, $Q\setminus Z\ne\varnothing$. We showed on Steps 1---5 that the magnetic field $B$ is homogeneous on each connected component $Q_i$ of the open set $Q\setminus Z$. This implies that $B$ has no zeros on $\overline{Q_i}$ (since the area form $\rd\sigma$ has no zeros). On the other hand, $B=0$ on $Z$. Therefore, $\overline{Q_i}\cap Z=\varnothing$, whence $Q_i=\overline{Q_i}$, and hence $Q_i=Q$. Thus, $B$ is homogeneous on the whole configuration manifold $Q=Q_i$, as required.
\end{proof}

\section {Derivation of constancy of scalar curvature from the Bertrand condition for slow motions}

Here we finish the proof of Theorem \ref {thm:Bert} by proving its item (c).

First, we will give a sketch of the proof of item (c) of Theorem \ref {thm:Bert}.

Following \cite[\S4, proof of Proposition 4]{ZKF}, let us expand the value $\Phi(E, K):=\Phi(1,E,K)$, which is the difference of longitudes at the adjacent pericentres of the orbit, into a power series in the small parameter $h>0$, and let the coefficients at the lower powers $h^0, h^2, h^4$ be equal to 0. Here the functions $E=E(c,h)$ and $K=K(c,h)$ are determined by the conditions $a_-(E, K)=c-h,$ $a_+(E, K)=c+h,$ where $c\in(a_1,a_2),$ $a_\pm(E,K):=K-A_\mp(1,E,K)$. Actually, the coefficients at $h^0$ and $h^2$ equal 0 due to Steps 4 and 5 (respectively) from the previous section.

In more detail: taking into account (\ref {eq:ODE}) and $RF\equiv \lambda^2=\const$ (by item (а) of Theorem \ref {thm:Bert}), we have
$$
\Phi(E, K)
= 2\int\limits_{a_-(E, K)}^{a_+(E, K)} \dfrac{\dot{\varphi}}{\dot{a}}\rd a
= 2 \int\limits_{a_-(E, K)}^{a_+(E, K)} \dfrac{F(a)(K-a)}{\sqrt{R(a)}\sqrt{2E-F(a)(K-a)^2}}\rd a
$$
$$
= 2 \int\limits_{a_-(E, K)}^{a_+(E, K)} \dfrac{F^\frac{3}{2}(a)(K-a)}{|\lambda|\sqrt{2E-F(a)(K-a)^2}}\rd a .
$$
Therefore, after the change $a = c + ht, a_- = c-h, a_+ = c+h, \rd a = h\rd t$, we obtain
$$
-\dfrac{|\lambda|}{2F(c)}\, \Phi(E(c,h), K(c,h)) =
$$
$$
= \int\limits_{-1}^1 \left(\dfrac{th}{\sqrt{1-t^2}}
+ \dfrac{(1-2t^2)(\hat F_1+\hat F_2th)}{2\sqrt{1-t^2}}h^2
+ \dfrac{3\hat F_1^3 -6\hat F_1\hat F_2 + 4\hat F_3(1 + t^2 - 2t^4)}{8\sqrt{1-t^2}}h^4
\right)\rd t
$$
$$
+ \bar{\bar{o}}(h^4)
= \dfrac{\pi}{8} \left(3\hat F_1^3 -6\hat F_1\hat F_2 + 4\hat F_3(1 + 1/2 - 3/4)\right) h^4 + \bar{\bar{o}}(h^4) =
$$
$$
= \dfrac{\pi}{8} \left(3\hat F_1^3 -6\hat F_1\hat F_2 + 3\hat F_3\right) h^4 + \bar{\bar{o}}(h^4),
$$
where $\hat{F_i} := \dfrac{F_i(c)}{F(c)},$ and $F_i(c)$ is the coefficient at $(a-c)^i$ in the Taylor expansion of the function $F=F(a)$ at the point $c$. Since $\Phi(E, K)\equiv0$ by Step 4 of the proof of items (a,b) of Theorem \ref {thm:Bert}, we conclude that the coefficient at $h^4$ must be 0, i.e.
\begin{equation} \label {eq:2nd}
6F'^3 - 6FF'F'' + F^2F''' \equiv 0.
\end{equation}

Let us show that the condition (\ref {eq:2nd}) is equivalent to the constancy of the scalar curvature. On the manifold of revolution with the Riemannian metric $g = dr^2 + f^2(r)d\varphi^2$, the scalar curvature equals $\Scal=-2\dfrac{f''(r)}{f(r)},$ cf.\ \cite[\S1.2, Remark 4]{ZKF}. Taking into account the change $a=a(r),$ $\dfrac{da}{dr}=\lambda f(r)=\dfrac{\lambda}{\sqrt{F(a)}},$ we obtain 
$$
\Scal=\dfrac{\lambda^2}{F^3(a)}(F''(a)F(a)-2F'(a)^2)=-\left(\dfrac{\lambda^2}{F(a)}\right)''.
$$
One easily checks that the condition $\Scal'\equiv0$ is equivalent to the condition (\ref {eq:2nd}).

Thus, the scalar curvature $\Scal\equiv\const$, and item (c) is proved.

It remains to prove that the conditions (a,b) in Theorem \ref {thm:Bert} are not only necessary, but also sufficient to fulfill the Bertrand condition for slow motions.
Let $B=\lambda \rd\sigma$, where $\rd\sigma$ is the area form, $\lambda=\lambda(a,\varphi)$ is a smooth function (not necessarily constant). Then the solutions $\gamma(t)=(a(t),\varphi(t))$ of the corresponding Lagrange system of equations with energy level $\dfrac{\varepsilon^2}2>0$ are exactly smooth parametrized curves $\gamma=\gamma(t)$ with velocity $|\dot\gamma(t)|=\varepsilon$ and covariant acceleration $\lambda(\gamma(t))\varepsilon.$ Therefore $s\mapsto\gamma(s/\varepsilon)$ 
is a naturally parametrized curve (with a natural parameter $s$) with geodesic curvature  $\dfrac{\lambda(\gamma(s/\varepsilon))}{\varepsilon}.$
Therefore, if the conditions (a,b) of Theorem \ref{thm:Bert} hold, then the orbits of all sufficiently slow motions of the charged particle will be circles and, in particular, will be closed. Hence, the Bertrand condition for slow motions will be fulfilled.

Theorem \ref {thm:Bert} is completely proved. \qed

The authors are thankful to A.~I.~Neishtadt for indicating a construction of slow-fast variables in the problem on the motion of a charged particle in a slow magnetic field by means of a guiding-centre transformation \cite {Nei}, to A.~A.~Oshemkov for the idea of a possible relation of the condition (\ref {eq:2nd}) with the constancy of the scalar curvature, to A.~Albouy for discussing the method of common isochronicity of a family of systems for solving the Bertrand problem.

E.A.\ Kudryavtseva\\
Lomonosov Moscow State University\\
{\em E-mail:} eakudr at mech.math.msu.su

\medskip
S.A.\ Podlipaev\\
Lomonosov Moscow State University\\
{\em E-mail:} podlipaev.sergey at gmail.com

\end{document}